\colorlet{cite}{LimeGreen!10!Green}
\tikzset{ 
  baseline=-2.3pt,
  text height=1.5ex, text depth=0.25ex,
  >=stealth,
  node distance=2cm,
  mid/.style={fill=white,inner sep=2.5pt},
}
\newtheoremstyle{mydef}
  {}		
  {}		
  {}		
  {}		
  {\scshape}	
  {. }		
  { }		
  {\thmname{#1}\thmnumber{ #2}\thmnote{ #3}}	
\theoremstyle{plain}	
\newtheorem{theorem}{Theorem} 
\newtheorem{lemma}[theorem]{Lemma} 
\newtheorem{proposition}[theorem]{Proposition}
\newtheorem*{theorem*}{Theorem}
\theoremstyle{mydef} 
\newtheorem{definition}[theorem]{Definition}
\newtheorem*{conjecture*}{Conjecture}
\theoremstyle{remark}
\newtheorem{remark}[theorem]{Remark}
\newtheorem{example}[theorem]{Example}
\DeclareMathOperator{\Proj}{Proj}
\newtheorem*{proposition*}{Proposition}
\newtheorem*{lemma*}{Lemma}
\newtheorem*{corollary*}{Corollary}
\theoremstyle{definition}
\theoremstyle{remark}
\newtheorem{question}[equation]{\bf Question}
\DeclareMathOperator{\Tot}{Tot}
\newcommand{\ce}{\mathrel{\mathop:}=}
\newcommand{\enm}[1]{\ensuremath{#1}}          %
\newcommand{\cal}[1]{\mathcal{#1}}
\renewcommand{\bar}[1]{\overline{#1}}
\newcommand{\CC}{\enm{\mathbb{C}}}
\newcommand{\NN}{\enm{\mathbb{N}}}
\newcommand{\RR}{\enm{\mathbb{R}}}
\newcommand{\PP}{\enm{\mathbb{P}}}
\newcommand{\Mm}{\enm{\cal{M}}}
\newcommand{\Nn}{\enm{\cal{N}}}
\newcommand{\Oo}{\enm{\cal{O}}}
\newcommand{\Rr}{\enm{\cal{R}}}
\newcommand{\Ww}{\enm{\cal{W}}}
\newcommand{\Xx}{\enm{\cal{X}}}
\renewcommand{\phi}{\varphi}
\renewcommand{\theta}{\vartheta}
\renewcommand{\epsilon}{\varepsilon}
\renewcommand{\to}[1][]{\xrightarrow{\ #1\ }}
\newcommand{\old}[1]{}
\title[20 open questions about deformations]{20 open questions about deformations of compactifiable manifolds}
\author{Edoardo Ballico,  Elizabeth Gasparim, {\tiny and}  Francisco Rubilar}
\address{Ballico - Dept. Mathematics, University of Trento, I-38050 Povo, Italy.\newline
Gasparim, Rubilar -  Depto. Matem\'aticas, Universidad Cat\'olica del Norte, Antofagasta, Chile. 
\newline
 ballico@science.unitn.it, etgasparim@gmail.com, rubilar\_n17@hotmail.com}
\begin{document}

\begin{abstract}
Deformation theory of  complex manifolds is a classical subject with recent new advances in the noncompact case using both algebraic and analytic methods.

In this note we recall some concepts of the existing theory and introduce new notions of deformations for manifolds with boundary, for compactifiable manifolds, and for $q$-concave spaces. We highlight some of the possible applications and give a list of open questions which we intend as a guide for further research in this rich and beautiful subject.
\end{abstract}
\maketitle
\tableofcontents

\section{Motivation}

There exists a beautiful and successful theory of deformations of complex structures for 
compact manifolds, as described in \cite{kod} (see also \cite{man,hart2}).
Here we will discuss some aspects of deformation theory of noncompact manifolds.
A  short recent survey paper in this area appears in \cite{GR}.
Many applications to mathematical physics require the use of noncompact manifolds, 
and some of these were our initial motivation to consider noncompact spaces. 
 For example, in 4 dimensions, the Nekrasov instanton partition function is defined over
noncompact spaces and uses equivariant integration, which would evaluate trivially to zero  
if considered over compact spaces. Similarly, in 6 dimensions the theory of BPS states 
considers integration over  moduli of bundles on noncompact threefolds. 
There are also various purely  mathematical 
reasons why one might need deformations of complex structures of noncompact 
manifolds. We are especially interested in toric Calabi--Yau threefolds, 
and these must necessarily be noncompact. Furthermore, motivated by questions coming from 
mirror symmetry, we also wish to consider total spaces of cotangent bundles and 
some of their deformations such as  noncompact
semisimple adjoint orbits. These are just a few among  many reasons 
to develop deformation theory on the noncompact case. 

Nevertheless, compact complex manifolds (or compact complex spaces) have several good properties, 
which noncompact ones lack. For the deformation theory of compact complex manifolds as 
in the papers by Kodaira and Spencer more than $65$ years ago 
compactness was essential 
to get global solutions through the use of   elliptic  PDE's. In all other approaches to deformation theory, even 
for singular complex spaces, compactness is crucial for deciding  existence questions.
Thus, we may expect to find additional difficulties when dealing with the noncompact case. 

There are many possible choices of how to go about generalizing deformation theory.
Here we propose some new concepts, giving examples. Our main goal 
 here is to motivate other researchers to think about some of  the beautiful problems on 
 deformation theory, and to such end we will state several open questions
 which  we hope shall serve as motivation.

We will consider  deformations of manifolds with boundary, compactifiable manifolds and $q$-concave manifolds. For each of these classes of spaces, we introduce notions of deformations that are interesting to study, and state open questions about them. We also include a section where we relate deformations with cobordisms and we propose a potential new approach to study both themes in a common setting.

\section{Submersions}

The classical Ehresmann fibration theorem says that if $f\colon M \rightarrow N$ is a proper submersion between smooth 
manifolds (possibly with boundary), then it is a smooth fiber bundle; hence smoothly locally trivial.  
The holomorphic analogue is totally false and 
deformation theory might be described as the study of this failure.

Let us first note that if a holomorphic map is a smooth submersion then it is also a holomorphic submersion. We now 
formulate  this precisely. 
Consider a holomorphic map  $f\colon  X\to Y$  between complex manifolds. Fix $x\in X$ and set $y\ce f(x)$. Let $n$ (resp. $m$) be
the dimension of $X$ at $x$ (resp. of $Y$ at $y$). 
Let $T_{x}X$ (resp. $T_{y}Y$) denote the tangent space at $x$ (resp. $y$)
of the complex manifold $X$ (resp. $Y$). 
Then $T_{x}X$ (resp. $T_{y}Y$) is a complex vector space of dimension $n$ (resp. $m$).
Let
$\Theta _{x}X$ denote the real tangent space of
$X$ at
$x$ (resp.
$Y$ at
$y$) seen as a $2n$-dimensional (resp. $2m$-dimensional) differentiable manifold. We have $\dim _{\RR} \Theta _{x}X =2n$,
$\dim _{\RR} \Theta _{y}Y =2m$, $$\Theta _{x}X\otimes _{\RR}\CC \cong T_{x}X\oplus \overline{T_{x}X} \quad \text{and}
\quad \Theta _{y}Y\otimes _{\RR}\CC \cong T_{y}Y\oplus \overline{T_{y}Y},$$
 where $\overline{V}$ denote the complex conjugate
of the complex vector space $V$. 

\begin{lemma} If $f\colon X\rightarrow Y$ is a submersion at $x$ of differentiable manifolds, i.e. if the
real differential $\theta _x\colon  \Theta _{x}X\to \Theta _{y}Y$ is surjective, then $f$ is a submersion at $x$ as
complex manifolds, 
i.e. the $\CC$-linear map $df_x\colon T_{x}X \to T_{y}Y$ is surjective. 
\end{lemma}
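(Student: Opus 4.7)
The plan is to complexify $\theta_x$ and use the type decomposition together with holomorphicity of $f$ to conclude that the map splits into a block-diagonal form, one of whose blocks is precisely $df_x$.

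First, I would note that extending scalars from $\RR$ to $\CC$ is exact, so the hypothesis that $\theta_x\colon \Theta_xX \to \Theta_yY$ is surjective immediately gives that
\[
\theta_x\otimes_{\RR}\CC\colon \Theta_xX\otimes_{\RR}\CC \longrightarrow \Theta_yY\otimes_{\RR}\CC
\]
is surjective as a $\CC$-linear map. Next, I would invoke the two identifications recalled just before the statement,
\[
\Theta_xX\otimes_{\RR}\CC \cong T_xX\oplus \overline{T_xX}, \qquad \Theta_yY\otimes_{\RR}\CC \cong T_yY\oplus \overline{T_yY},
\]
so that $\theta_x\otimes_{\RR}\CC$ can be written as a linear map between these direct sums.

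The key step is to observe that because $f$ is holomorphic, its complexified differential is compatible with the type decomposition: the almost complex structures $J_x$ and $J_y$ (which are precisely what cuts out the $(1,0)$ summands as $+i$-eigenspaces and the $(0,1)$ summands as $-i$-eigenspaces) satisfy $\theta_x(J_xv)=J_y\theta_x(v)$ for every $v\in \Theta_xX$. Extending scalars, $\theta_x\otimes_{\RR}\CC$ intertwines the $\pm i$-eigenspace decompositions on either side, hence it carries $T_xX$ into $T_yY$ and $\overline{T_xX}$ into $\overline{T_yY}$. By construction, the restriction to $T_xX$ is exactly $df_x$, while the restriction to $\overline{T_xX}$ is the complex conjugate map $\overline{df_x}$. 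Thus $\theta_x\otimes_{\RR}\CC$ has the block-diagonal form $df_x\oplus \overline{df_x}$.

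Finally, since a block-diagonal map is surjective if and only if each block is surjective, the surjectivity obtained in the first step forces $df_x\colon T_xX\to T_yY$ to be surjective, which is the conclusion. The only genuinely substantive step is the compatibility of the real differential of a holomorphic map with the two almost complex structures; I expect this to pose no obstacle, as it is just a coordinate-free rewriting of the Cauchy--Riemann equations for $f$ at $x$.
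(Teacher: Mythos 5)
Your proof is correct and follows essentially the same route as the paper's: complexify $\theta_x$, identify $\theta_x\otimes_{\RR}\CC$ with $df_x\oplus\overline{df_x}$ via the decomposition $\Theta_xX\otimes_{\RR}\CC\cong T_xX\oplus\overline{T_xX}$ (using holomorphicity of $f$), and conclude that $df_x$ is surjective. The only cosmetic difference is that you phrase the last step as ``a block-diagonal map is surjective iff each block is,'' whereas the paper runs the equivalent rank computation $\mathrm{rk}(J_{\RR})=\mathrm{rk}(J)+\mathrm{rk}(\overline{J})=2\,\mathrm{rk}(J)$ on the Jacobian.
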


\begin{proof}
The surjectivity of  the real differential $\theta _x: \Theta _{x}X\to \Theta _{y}Y$ 
implies the surjectivity of the $\RR$-linear map 
$\theta _{x\otimes \CC} \colon \Theta _{x}X\otimes _{\RR}\CC \to \Theta
_{y}Y\otimes _{\RR}\CC$, i.e. of the map $df_x\oplus \overline{df_x}: T_{x}X\oplus \overline{T_{x}X}
\to T_{y}Y\oplus \overline{T_{y}Y}$. To prove our claim it is sufficient to prove that the real rank of $df_x\oplus
\overline{df_x}$ is twice the complex rank of $df_x$. 
Fixing a basis, we can see the linear map $df_x\oplus \overline{df_x}$ as a diagonal matrix. Denote by $J$ 
the complex Jacobian of $f$, since we are assuming $f$ holomorphic, denoting the real Jacobian of $f$ by $J_{\mathbb{R}}$ we have
$$J_{\mathbb{R}}=\begin{bmatrix}
J&0\\
* &\overline{J}
\end{bmatrix}$$
Since $\theta _x$ is surjective, the rank of $J_{\mathbb{R}}$ is $2n$. So we have 
$$2n=\mathrm{rk}(J)+\mathrm{rk}(\overline{J})=2\mathrm{rk}(J),$$
hence real rank of $df_x\oplus
\overline{df_x}$ is twice the complex rank of $df_x$. Therefore, $J$ has rank $n=\mathrm{dim}T_{y}Y$ 
which is maximal, thus we conclude that $f$ is a submersion as complex manifolds.
\end{proof}

\begin{example} Here is a classic example of a holomorphic submersion 
that is not holomorphically locally trivial. Let $S= \mathbb C \setminus \{0,1\}$, and 
 consider the family $X$ of elliptic curves in $\mathbb P^2$ parametrized by $\lambda \in S$ given by 
 $$y^2z= x(x-z)(x-\lambda z).$$ Then the corresponding proper submersion 
$$ \begin{array}{rccl} \pi \colon &X &\rightarrow & S\\
                             & (\lambda, [x:y:z]) &  \mapsto & \lambda
\end{array}$$
is certainly not locally trivial because its fibers are not isomorphic. 
\end{example}


\section{Deformations of manifolds with boundary}

Fix $r\in (\NN\setminus \{0\})\cup \{+\infty\}$. Let $M$ and $N$ be Hausdorff $C^r$-manifolds with a countable basis for their
topology. Let $C^r(M,N)$ denote the set of all $C^r$-maps $f\colon  M\to N$. 
There are two useful topologies (the weak and the
strong topology) on the set $C^r(M,N)$ \cite[Ch. 2, \S\thinspace1]{hir}, which however are the same when $M$ is compact \cite[last 4
lines of page 35]{hir}. Since in this section we consider manifolds with boundary, 
we only need the case in which $M$ is compact, thus we  describe the weak topology (also called
the compact-open topology for $f$ and its derivatives). Let $(\phi,U)$ and $(\psi ,V)$ be charts on $M$ and $N$ respectively.
Let $K\subset U$ be a compact set such that $f(K)\subset V$. Fix a positive real number $\epsilon$ and a non-negative integer
$k$. Let $\vert \ \ \vert$ denote the euclidean norm on $\psi(V)$.
Let $\Nn (f; (U,\phi),(V,\psi),\epsilon,k)$ denote the set of all $g\in C^r(M,N)$ such that 
$$\vert D^i(\psi{f}\phi^{-1})(x) -(\psi{g}\phi^{-1})(x)\vert < \epsilon$$for all $x\in \phi(K)$ and all partial derivatives $D^i$ of order
$i\le k$. 

The {\it weak topology} on $C^r(M,N)$ is the topology for which the collection of 
all sets $\Nn (f; (U,\phi),(V,\psi),\epsilon,k)$ is
prescribed to be a subbasis, i.e. an open set for this topology is an arbitrary union of a finite intersection of sets $\Nn
(f; (U,\phi),(V,\psi),\epsilon,k)$. In the remaining of this section we use the weak topology on $C^r(M,N)$.

There is also the notion 
of $C^r$ manifold with a boundary $(M,\partial M)$ and a topology for the family of all $C^r$-maps $C^r((M,\partial M);(N,\partial N))$
between two manifolds with boundaries \cite[Ch. 2,\S\thinspace3]{hir}. We use these notions to define a $C^r$-locally trivial family of complex manifolds with boundary.
However, as in \cite[pp.~30-31]{hir} we may  also consider  maps from 
 a manifold $(M,\partial M)$ with $\partial M\ne \emptyset$ to a manifold $Y$ with $\partial Y =\emptyset$.

\begin{definition}\label{def-edo}
Let $X$, $Y$ be $C^\infty$ manifolds and $f\colon
 X\to Y$ a $C^\infty$-map. We say that $f$ is a {\it deformation}
  of connected $n$-dimensional complex manifolds if:
  \begin{itemize}
  \item  $X$ and $Y$ are connected, and $\dim X=\dim Y+n$ with $n>0$ 
  \item $f$ is a submersion with connected fibres  $\{M_t\}_{t\in Y}$
  \item  there exists  a countable open covering $\{U_i\}_{i\ge 1}$ of $X$ such that
  for each integer $i\ge 1$ there exists
a collection of $n$ $C^\infty$ functions $$z_i^1(q),\dots ,z_i^n(q)\colon U_i\to \CC$$ such that for each $t\in Y$, the assignment
$$\{q\mapsto (z_i^1(q),\dots ,z_i^n(q))_{\mid U_i\cap f^{-1}(t)}\}$$ provides a complex structure on $M_t$. 
\end{itemize}
\end{definition}

\begin{remark}
Contrary to \cite[p.\thinspace 184]{kod} we do not assume that $f$ is proper, i.e. we do not assume that each $M_t$ is compact.
\end{remark}

\begin{definition}
We say that $f$ is \emph{$C^\infty$ locally trivial on the base} if for each $t\in Y$ there is an open neighborhood $U$ of $t\in Y$
and a diffeomorphism $v\colon 
 f^{-1}(U) \to U\times M_t$ such that $\pi _1\circ v = f_{|f^{-1}(U)}$, where $\pi _1\colon U\times M_t\to U$ is the projection onto the first factor.
\end{definition}

Next we recall the definition of deformation that appears in \cite{GR}.

\begin{definition}{\cite[Def.\thinspace9]{GR}}
	\label{deformation}
	A {\it deformation family} of a complex manifold $X$ is a holomorphic surjective submersion $\tilde{X}\stackrel{\pi}{\rightarrow}D$, where $D$ is a complex disc centred at $0$ (possibly a vector space, possibly infinite dimensional), satisfying:
	\begin{itemize}
		\item $\pi^{-1}(0)=X$,
		\item $\tilde{X}$ is locally trivial  in the $C^{\infty}$ category.
	\end{itemize}
	The fibres $X_t=\pi^{-1}(t)$ are called \textit{deformations} of $X$.
\end{definition}

In further   generality, allowing the parameter space to be a variety or a scheme, and  requiring that the bundle be locally trivial,
we obtain the concept of a {\it family} of complex structures.

\begin{remark}
Note that in Def.\thinspace{\ref{def-edo}} we are not assuming that $f$ is locally trivial in the $C^{\infty}$-category. Def.\thinspace{\ref{def-edo}} is not the same that \cite[Def.\thinspace9]{GR}. Indeed, \cite[Ex.\thinspace7]{GR} 
shows that these two definitions are not equivalent in full generality, but we propose the following question.
\end{remark}

\begin{question}
Under what conditions we can say that the two notions of deformations mentioned above are equivalent, or one implies the other? 
\end{question}


\subsection{Deformations and the $h$-cobordism theorem}\label{hcob}

The fundamental  idea behind cobordism theory is that 2 manifolds are cobordant if their disjoint union 
forms the boundary of some manifold. 
Since a manifold $M$ and any of its deformations $M'$ occur in a family, by considering the family 
restricted to  a path on the base connecting the fibers $M$ to $M'$,  
we see  that these are  cobordant.

In this section, we suggest a relation between  deformations of complex manifolds with boundary 
and  the very well known $h$-cobordism theorem. This is just one of many possible connections  
among the concepts of deformations and cobordisms. 
 We quote from the introduction of Stong \cite{St}: {\it As in any subject, the primary problem is 
 classification of the objects within isomorphism and determination of effective and computable invariants
 to distinguish the isomorphism classes}. In our setup, the notion of cobordant manifolds can be 
 regarded as such a computable invariant.
 
  It is worth mentioning that in  various setups (but not all)
  deformation equivalent implies diffeomorphic, therefore homeomorphic. So, one might wish to use the notions of 
  diffeomorphic or homeomorphic manifolds instead of cobordant. However, it is well known that deciding if 2 
  smooth manifolds are 
  homeomorphic is not computable in the following precise sense: It has been known for a long time 
 that for any finitely presented group $S$, it is possible to 
  construct a 4 dimensional real manifold $ M(S)$  with fundamental group $S$ in such a way that 
  $M(S)$ and $M(R)$ will be homeomorphic if and only if $S$ and $T$ are isomorphic groups. However, 
  the word problem of deciding when 2 finitely presented groups are isomorphic is unsolvable   \cite{Ma}.
Cobordism, being a milder invariant, is more computable.

 We recall some basic ideas and fundamental results about cobordisms and refer the readers to the 
 beautiful book by Stong \cite{St} and references therein for the general theory.

\begin{definition}
An $(n+1)$-dimensional 
{\it cobordism} is a quintuple $ (W;M,N,i,j)$ consisting of an $(n+1)$-dimensional compact differentiable manifold with boundary $W$; closed $n$-manifolds $M, N$ and embeddings $i\colon M\hookrightarrow \partial W$ and  $j\colon N\hookrightarrow \partial W$ with disjoint images such that
$$ \partial W=i(M)\sqcup j(N).$$
\end{definition}

There are many types of cobordisms such as: oriented, unoriented, framed, real, complex, relative, unitary,  special unitary, symplectic, Lagrangian,
exact, $h$-cobordisms,
s-cobordisms,
and so on. 
Here, the choice of to focus on $h$-cobordisms simply reflects the preference of the second author when typing this paper, but certainly many 
other types are of interest. Without making any attempt to review the vast literature on the subject, we  
just point out  one example of application of the concept complex cobordisms, which
 follows from 
the more general result for weakly complex manifolds \cite[Thm.1'']{Mi1} proved by Milnor and Novikov:

\begin{proposition}
If a complex manifold is the boundary of another complex manifold, then all of its Chern numbers are zero. 
\end{proposition}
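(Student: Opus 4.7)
The plan is to combine naturality of Chern classes with Stokes' theorem on the pair $(W,\partial W)$. Write $M^{2n}=\partial W$ so that the ambient $W$ has odd real dimension $2n+1$; in particular $W$ cannot carry an honest complex structure, and the statement must be read in the Milnor--Novikov sense, with $W$ weakly (stably) complex so that its stabilized tangent bundle is a complex vector bundle. Compactness of $W$ is essential both for the Chern numbers of $M$ to be defined and for Stokes to apply.

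Next I would exploit the triviality of the normal bundle of the boundary. The outward unit normal trivializes $\nu _{M/W}$, giving
$$i^*TW \;\cong\; TM \oplus \underline{\RR}$$
as real bundles, where $i\colon M\hookrightarrow W$ is the inclusion. Since Chern classes are stable invariants, this identification endows $TM$ with a stable complex structure and yields
$$c_k(TM) \;=\; i^*c_k(TW) \in \HH^{2k}(M;\ZZ)$$
for every $k$.

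A Chern number of $M$ then has the form $\langle P(c_1(TM),\dots ,c_n(TM)),[M]\rangle$ for some weighted homogeneous polynomial $P$ of top degree $2n$, and by the preceding identification it equals $\langle i^*\alpha ,[M]\rangle$, where
$$\alpha \;=\; P(c_1(TW),\dots ,c_n(TW)) \in \HH^{2n}(W).$$
I would then represent $\alpha$ by a closed Chern--Weil form $\omega$ built from a unitary connection on $TW$, and apply Stokes' theorem:
$$\int _M i^*\omega \;=\; \int _{\partial W}\omega \;=\; \int _W d\omega \;=\; 0,$$
the last equality holding because $\omega$ is closed. An equivalent purely cohomological argument uses the long exact sequence of $(W,M)$, in which $\delta\circ i^*=0$, together with the adjointness $\langle i^*\alpha ,\partial c\rangle =\pm\langle\delta i^*\alpha ,c\rangle$ and the identity $\partial [W,M]=[M]$.

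The main obstacle is conceptual rather than technical: one has to pin down what a stable complex structure on the odd-real-dimensional $W$ means and to check that restriction to the boundary is compatible with it, so that the isomorphism $i^*TW\cong TM\oplus\underline{\RR}$ really induces the claimed equality of Chern classes. Once that calibration is in place, the conclusion is a one-line application of Stokes.
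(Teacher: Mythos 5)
Your argument is correct, and it is the standard one: trivialize the normal bundle of the boundary to get $i^*TW\cong TM\oplus\underline{\RR}$, identify the (stable) Chern classes of $M$ with restrictions of classes on $W$, and kill the pairing with $[M]=\partial[W,M]$ either by Stokes or by $\delta\circ i^*=0$ in the exact sequence of the pair. The paper, however, does not prove the proposition at all: it simply records it as a corollary of the Milnor--Novikov theorem on weakly complex manifolds (cited as Thm.~1$''$ of Milnor's cobordism survey), which says that the complex cobordism class of a stably complex manifold is detected by its Chern numbers. So your route is genuinely different in character --- a direct, self-contained verification of the easy direction (boundaries have vanishing Chern numbers), versus an appeal to the much stronger classification result. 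What your approach buys is transparency and minimal hypotheses (only a compatible stable complex structure on $W$ and compactness are used); what the citation buys the authors is brevity and the converse direction, which your argument does not and cannot give. You also correctly flag the one real subtlety, which the paper glosses over: since $\dim_{\RR}W$ is odd, $W$ cannot literally be a complex manifold, so the statement must be read in the weakly (stably) complex sense, and the ``unitary connection on $TW$'' in your Chern--Weil step should really live on the stabilized tangent bundle $TW\oplus\underline{\RR}^k$ carrying the complex structure; once that calibration is made explicit, your proof is complete.
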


We now discuss $h$-cobordisms. For a classical  introduction to this subject, we suggest \cite{Mi1, Mi2}.
A brief overview can be found in the $h$-cobordism wikipedia page \url{https://en.wikipedia.org/wiki/H-cobordism}.

\begin{definition}
	An $(n+1)$-dimensional cobordism $ (W;M,N,i,j)$ is an {\it $h$-cobordism}
	 if the inclusions $i\colon M\hookrightarrow \partial W$ and  $j\colon N\hookrightarrow \partial W$ are homotopy equivalences.
\end{definition}

We recall some fundamental and well known results in low dimensions concerning  $h$-cobordisms and then,
 passing to dimension at least 5, we  
state the $h$-cobordism theorem,  proved  in 1962 by S. Smale \cite{S2} expanding on  ideas of his proof (just one year earlier) 
of the Poincar\'e conjecture in dimension greater than or equal to $5$ \cite{S1}.

For $n = 0$, the $h$-cobordism theorem is trivially true: the interval is the only connected cobordism between connected $0$-manifolds.

For $n = 1$, the $h$-cobordism theorem is vacuously true, since there is no closed simply-connected $1$-dimensional manifolds.

For $n = 2$, the $h$-cobordism theorem is equivalent to the Poincar\'e conjecture stated 
by Poincar\'e in 1904 (one of the Millennium Problems) and was proved by Grigori Perelman in 
\cite{P1, P2, P3}.

For $n = 3$, the $h$-cobordism theorem for smooth manifolds has not been proved and, due to the $3$-dimensional 
Poincar\'e conjecture, is equivalent to the hard open question of whether the $4$-sphere has non-standard smooth structures.

For $n = 4$, the $h$-cobordism theorem was proved by Freedman and Quinn for topological manifolds \cite{fq}, 
but the analogous statement  in the $\mathrm{PL}$ and smooth categories was shown to be  false by  Donaldson \cite{D}.

The following is one of the most famous theorems in cobordism theory:

\begin{theorem}[$h$-cobordism]
Let $n$ be at least $5$ and let $W$ be a compact $(n + 1)$-dimensional $h$-cobordism between $M$ and $N$ in the category $\mathcal{C}=\mathrm{Diff}, \mathrm{PL}, \textup{or } \mathrm{Top}$ such that $W, M$ and $N$ are simply connected, then $W$ is $\mathcal{C}$-isomorphic to $M \times [0, 1]$. The isomorphism can be chosen to be the identity on $M \times \{0\}$.

This means that the homotopy equivalence between $M, W,$ and $N$ is homotopic to a $\mathcal{C}$-isomorphism.	
\end{theorem}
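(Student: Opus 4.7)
The plan is to prove this by the classical Morse-theoretic/handle decomposition argument due to Smale, carried out in whichever of the three categories $\mathcal{C}$ we are working in (with the appropriate triangulation/smoothing results in the $\mathrm{PL}$ and $\mathrm{Top}$ cases). First I would choose a Morse function (or $\mathrm{PL}$/topological analogue) $f\colon W\to [0,1]$ with $f^{-1}(0)=M$, $f^{-1}(1)=N$, all critical points in the interior and only non-degenerate, so that $W$ acquires a handle decomposition
\[
W \;=\; (M\times [0,1]) \cup H_1 \cup H_2 \cup \dots \cup H_r
\]
relative to $M$. The whole goal is to show that all handles can be eliminated; if $r=0$ we are done, since then $W$ is (diffeomorphic, $\mathrm{PL}$-isomorphic, or homeomorphic) to a collar $M\times [0,1]$.

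Next I would rearrange the handles so that they appear in order of increasing index $k=0,1,\dots,n+1$, using the standard reordering lemma (handles of index $\le k$ can be isotoped below handles of index $>k$). The heart of the argument is then a descending/ascending induction on the index. Handles of index $0$ and $n+1$ are removed using the connectedness of $W$, $M$, $N$: a $0$-handle creates a new component which, being connected to the rest, must be cancelled against a $1$-handle; dually for $(n+1)$-handles against $n$-handles. Index $1$ and $n$ handles are eliminated by the handle-trading argument: simple connectedness of $M$, $N$, and $W$ allows each $1$-handle to be traded for a $3$-handle (via an embedded $2$-sphere that bounds a disk, using $\pi_1(W)=1$ and $n+1\ge 5$ so that the relevant disks embed), and dually for $n$-handles.

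For middle-dimensional handles $2\le k \le n-1$ the argument becomes algebraic-topological. The $h$-cobordism hypothesis $H_*(W,M)=0$ (which follows from the inclusion being a homotopy equivalence) makes the chain complex generated by the handles acyclic, so bases may be chosen in which the incidence matrix between $k$-handles and $(k+1)$-handles becomes the identity after stabilization. Geometrically this means one can arrange the attaching sphere of each $(k+1)$-handle to meet the belt sphere of a paired $k$-handle algebraically once. The crucial step is to upgrade this algebraic count to a geometric one by removing excess intersection pairs via the \emph{Whitney trick}: extra pairs of opposite-sign intersections are removed by isotopy across an embedded Whitney disk. Once each such pair intersects geometrically in a single transverse point, the first cancellation lemma eliminates both handles.

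The main obstacle, and the reason for the hypothesis $n\ge 5$, is precisely the Whitney trick. It requires embedding a $2$-disk whose boundary joins the two intersection points and which avoids the attaching and belt spheres except along that boundary; general position gives such an embedded disk only when the ambient dimension is at least $5$ and when $\pi_1$ of the ambient manifold vanishes (hence the simple-connectedness hypotheses on $W$, $M$, $N$). This is exactly the step that fails for smooth $4$-manifolds, consistent with Donaldson's counterexamples cited above, while Freedman's topological Whitney trick recovers the $\mathrm{Top}$ case for $n=4$. Once the Whitney trick is available, the inductive cancellation of all handles completes the proof, producing a $\mathcal{C}$-isomorphism $W\cong M\times [0,1]$ which, by construction of the cancelling isotopies supported away from $M\times \{0\}$, can be taken to be the identity on $M\times\{0\}$.
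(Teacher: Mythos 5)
The paper does not prove this theorem at all: it is quoted as a classical result, with the proof attributed to Smale \cite{S2} and the reader referred to Milnor's lectures \cite{Mi2}. So there is no ``paper's proof'' to compare against; what you have written is an outline of exactly the standard argument those references contain, and as a sketch it is essentially correct: handle decomposition from a Morse function, reordering by index, elimination of index $0$, $1$, $n$, $n+1$ handles via connectivity and handle trading, diagonalization of the incidence matrices using acyclicity of $H_*(W,M)$ (with no Whitehead-torsion obstruction because $\pi_1(W)=1$), and geometric cancellation via the Whitney trick, which is the step requiring $n\ge 5$ and simple connectivity.

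Two places where your sketch is thinner than a real proof would need to be. First, the $\mathrm{PL}$ and especially the $\mathrm{Top}$ cases are not obtained by ``appropriate triangulation/smoothing results'' as an afterthought: topological handle decompositions in high dimensions rest on the Kirby--Siebenmann theory, and the $\mathrm{Top}$ statement is genuinely later and harder than Smale's smooth theorem (compare the paper's own citation of \cite{fq} for the $4$-dimensional topological case). Second, the Whitney trick is performed inside an intermediate level set, an $n$-manifold, and one must verify that \emph{that} level is simply connected; this follows from $\pi_1(W)=\pi_1(M)=1$ together with the fact that only handles of index $2\le k\le n-1$ remain, but it is a separate lemma, not a direct consequence of the hypotheses as stated. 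Neither point is a wrong turn --- both are standard --- but a complete write-up would have to supply them.
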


There ought to be many restrictions to families of deformations   in complex dimensions 3 and higher coming from this result, 
which apply to deformations of complex structures, and are certainly worth studying. In this initial study, we just list some very basic questions.

\begin{question}
Compare the notions of cobordism and deformations of compact manifolds. 
\end{question}

Cobordisms themselves may be considered in family and such families appear in the recent literature 
considered within  perspectives of various types of 
geometry  and topology  such as \cite[p.\thinspace 136]{IJP} and \cite[p.\thinspace 61]{ALP} 
as well as in current research in symplectic geometry, in particular in connection to Floer theory \cite{WW} 
and Fukaya categories of Lagrangian cobordisms \cite{BC, C}.
\begin{question}
Compare the notion of deformations of manifolds with boundaries with the notion of families of cobordisms.
\end{question}

\begin{question}
	Let $n\geq2$. If an $n$-dimensional complex manifold $M$ 
	with boundary has nontrivial  deformations, then  what type of cobordism exists between the components of $\partial M$?
\end{question}

If the assumption that $M$ and $N$ are simply connected is dropped, $h$-cobordisms need not be cylinders; the obstruction is exactly the Whitehead torsion $\tau(W, M)$ of the inclusion $M\hookrightarrow W$.

There exits a stronger version of the $h$-cobordism theorem, known as the $s$-cobordism theorem.
Namely, the $s$-cobordism theorem (the $s$ stands for simple-homotopy equivalence), proved independently by Barry Mazur, John Stallings, and Dennis Barden, states (with the same assumptions as above but where $M$ and $N$ need not be simply connected):
\begin{theorem}[$s$-cobordism]\label{s-cob}
An $h$-cobordism is a cylinder if and only if Whitehead torsion $\tau(W, M)$ vanishes.
\end{theorem}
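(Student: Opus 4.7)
The plan is to prove both directions, with the forward one essentially tautological and the reverse requiring the full machinery of handle decompositions. The forward direction reduces to the observation that if $W \cong M \times [0,1]$, then the inclusion $M \hookrightarrow W$ factors as a product projection composed with an obvious deformation retraction by cells of trivial type, so by the very definition of Whitehead torsion (as the obstruction to a homotopy equivalence being simple), we have $\tau(W,M) = 0$.

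For the converse, I would mimic the strategy used for the $h$-cobordism theorem itself, adapted to the non-simply-connected setting. First, I would choose a smooth (or PL) Morse function $f\colon W \to [0,1]$ with $f^{-1}(0) = M$ and $f^{-1}(1) = N$, producing a handle decomposition of $W$ relative to $M$. Standard rearrangement lemmas let me assume the handles are attached in order of increasing index. Using that the inclusion $M \hookrightarrow W$ is a homotopy equivalence (so induces isomorphisms on $\pi_1$ and on all homology groups with local coefficients over $\ZZ[\pi_1(M)]$), the handle-trading tricks of Smale allow elimination of $0$- and $1$-handles; applying the same argument to the dual handle decomposition eliminates $(n)$- and $(n+1)$-handles. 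This leaves only handles of two consecutive middle indices, say $k$ and $k+1$, with $2 \le k \le n-2$.

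The resulting cellular chain complex over $\ZZ[\pi_1(M)]$ is concentrated in degrees $k$ and $k+1$, and is acyclic because the inclusion is a homotopy equivalence; thus the boundary map $\partial\colon C_{k+1} \to C_k$ is an isomorphism of based free $\ZZ[\pi_1(M)]$-modules whose class in $K_1(\ZZ[\pi_1(M)])/\{\pm \pi_1(M)\} = \mathrm{Wh}(\pi_1(M))$ is precisely $\tau(W,M)$. Now I invoke the hypothesis $\tau(W,M) = 0$: this means $\partial$ can be reduced to the identity matrix by a finite sequence of elementary operations (row/column additions of multiples of generators of $\pi_1(M)$, stabilizations by identity blocks, and permutations with $\pm$ signs of group elements). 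Each of these algebraic moves is realized geometrically by handle slides (for elementary row operations), introduction of cancelling $(k,k+1)$-pairs (for stabilization), and renumbering/reorienting handles.

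The main obstacle, as always in this circle of ideas, is the geometric realization of algebraic cancellation: once the incidence matrix is the identity, one must produce a genuine geometric cancellation of the paired $k$- and $(k+1)$-handles. This requires the Whitney trick to remove excess intersection points of the attaching sphere of a $(k+1)$-handle with the belt sphere of a $k$-handle, which is precisely where the dimension hypothesis $n \ge 5$ (inherited from the ambient $h$-cobordism framework) is indispensable, and where the non-simply-connected refinement is subtle because Whitney disks must be chosen in the correct component of the path space — that is, representing the trivial element of $\pi_1$ rather than merely being nullhomologous. Once all handles are cancelled, $W$ admits a Morse function without critical points relative to $M$, and the gradient flow produces the required $\mathcal{C}$-isomorphism $W \cong M \times [0,1]$ restricting to the identity on $M \times \{0\}$.
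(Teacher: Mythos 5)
The paper does not actually prove this statement: it quotes the $s$-cobordism theorem as a classical result of Mazur, Stallings, and Barden, stated with the standing hypotheses of the preceding $h$-cobordism theorem ($n\ge 5$, compactness), and offers no argument. So there is no ``paper proof'' to compare against; your proposal has to be judged on its own terms, and as an outline of the standard Mazur--Stallings--Barden argument it is essentially correct. The forward direction (a product cobordism gives a simple homotopy equivalence, hence vanishing torsion) is right, and the converse correctly identifies the key chain: handle decomposition from a Morse function, elimination of extreme-index handles, identification of $\tau(W,M)$ with the class of the boundary map of the resulting based acyclic complex in $\mathrm{Wh}(\pi_1(M))$, geometric realization of the algebraic reduction to the identity matrix, and finally the $\pi_1$-refined Whitney trick, which is exactly where $n\ge 5$ enters.

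One step is glossed over. Eliminating $0$-, $1$-, $n$- and $(n+1)$-handles leaves handles of \emph{all} indices from $2$ to $n-1$, not just two consecutive ones; to reach the two-index normal form you need iterated handle trading (replacing index-$k$ handles by index-$(k+2)$ handles using the acyclicity of the relative chain complex), which is a separate lemma from the low-index elimination you describe. This is standard but not automatic, and a complete write-up would have to include it, together with the ``modification theorem'' justifying that stabilization and elementary row operations over $\ZZ[\pi_1(M)]$ are each realizable by geometric moves. Neither omission is a wrong turn; they are the expected places where a sketch defers to the literature (Milnor's lectures, Kervaire, or Lück's notes).
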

The torsion vanishes if and only if the inclusion $M\hookrightarrow W$ is not just a homotopy equivalence, but a simple-homotopy equivalence.

Note that one need not assume that the other inclusion $N\hookrightarrow W$ is also a simple homotopy equivalence, 
it follows from the theorem.

Considering when $h$-cobordisms are trivial, we start our approach  towards the main question considered in Q. \ref{main-question}
concerning deformation theory in the non-compact case.
Meanwhile, 
returning to deformation theory, recall that the core results of Kodaira--Spencer theory state that for  a compact complex manifold $X$, the first cohomology group with coefficients in the tangent sheaf, that is $\mathrm{H}^1(X,TX)$, parametrizes infinitesimal deformations of $X$, with corresponding  obstructions given by elements of $\mathrm{H}^2(X,TX)$. In the case of cobordisms, the Whitehead torsion characterizes when an $h$-cobordism is a cylinder. Thus, it is natural to consider the following question.
	

\begin{question} Is there a numerical invariant that characterizes when a 
manifold with boundary is rigid, similarly to the way  Whitehead torsion characterizes 
when an $h$-cobordism is a cylinder?
\end{question}

Categorically, the collection of $h$-cobordisms forms a groupoid.
Then a finer statement of the $s$-cobordism theorem is that the isomorphism classes of this groupoid (up to $\mathcal{C}$-isomorphism of $h$-cobordisms) are torsors for the respective Whitehead groups $\mathrm{Wh}(\pi)$, where $ \pi \cong \pi _{1}(M)\cong \pi _{1}(W)\cong \pi _{1}(N)$.

\begin{question}
Express the definitions and properties of deformations of manifolds with boundaries in the language of 
groupoids and torsors. 
\end{question}

\section{Deformations of compactifiable manifolds}

 When we are only interested in an open subset $X$ of a compact complex manifold $W$  
 such that $D:= W\setminus X$ is a hypersurface, we may consider deformations of pairs $(W,D)$ such as in \cite{iac}.
  This may be too restrictive for the purpose of holomorphic deformations
   when we are only interested in $X$, even if  we know that $X$ 
  can be compactified; such a notion has been considered in the 
  literature \cite{Ka}, and here we give a slightly generalized version. 
   In our definition of  compactifiable deformation of the pair  $(X,W)$ 
  we do not require that the map be a submersion (nor that it be flat) at the points of $D$. 
  We inquire how restrictive are our assumptions and we pose a few questions related to the compactifiability of 
deformations of the threefolds $W_k$ studied in \cite{BGS,GKRS}.

\begin{definition}
	Take finitely many holomorphic functions $f_1,\ldots,f_k$ in a domain $D$ of $\mathbb{C}^n$ and form their ideal sheaf $$\mathcal{I}:=\mathcal{I}_D:=\mathcal{O}_Df_1+\cdots+\mathcal{O}_Df_k\subset\mathcal{O}_D.$$
	The quotient sheaf $\mathcal{O}_D/\mathcal{I}_D$ is a \textit{sheaf of rings} on $D$. Put
	$$X:=\mathrm{Supp}(\mathcal{O}_D/\mathcal{I}_D)=\{x\in D\colon\mathcal{I}_{D,x}\neq \mathcal{O}_{D,x}\},\quad \text{and} \quad \mathcal{O}_X:=(\mathcal{O}_D/\mathcal{I}_D)|_X.$$
	Clearly,  $X$ equals the set 
	$$N(\mathcal{I}):=N(f_1,\ldots,f_k)=\{x\in D\colon f_1(x)=\cdots=f_k(x)=0 \}$$
	 of common zeros of $f_1,\ldots,f_k$ in $D$ and, moreover, $X$ is \textit{closed in} $D$. Every stalk $O_{X,x}=(\mathcal{O}_{D,x}/\mathcal{I}_x)$ is an \textit{analytic algebra}, hence $\mathcal{O}_X$ is a sheaf of \textit{local $\mathbb{C}$-algebras} on $X$. The $\mathbb{C}$-ringed space $(X,\mathcal{O}_X)$ is called \textit{the (complex) model space, defined (on $D$) by $\mathcal{I}$}. We write $V(f_1,\ldots,f_k)$ or simply $V(\mathcal{I})$ for this space. Note that $|V(\mathcal{I})|=N(\mathcal{I})$.
\end{definition}
\begin{definition}
	A $\mathbb{C}$-ringed space $(X,\mathcal{O}_X)$ is called a \textit{complex space}, if $X$ is a Hausdorff \textit{space} and if every point of $X$ has an open neighborhood $U$ such that the \textit{open $\mathbb{C}$-ringed subspace $(U,\mathcal{O}_U)$} of $(X,\mathcal{O}_X)$ is isomorphic to a complex model space. Thus, locally, complex spaces are determined by \textit{finitely many} holomorphic functions in domains of number spaces.
\end{definition}
\textit{Complex model spaces} are complex spaces. In a complex space $(X,\mathcal{O}_X)$ every open set $U\subset X$ defines an \textit{open complex subspace $(U,\mathcal{O}_U)$}. Complex spaces form a (full) \textit{subcategory} of the category of $\mathbb{C}$-ringed spaces. Morphisms (isomorphisms) are called \textit{holomorphic} (\textit{biholomorphic}) maps; $\mathcal{O}_X$-modules on a complex space $(X,\mathcal{O}_X)$ are called \textit{analytic sheaves on $X$.}
\\
\textit{Algebraic} properties of the analytic algebras $\mathcal{O}_{X,x}$ are used to introduce \textit{geometrical} notions. For instance, we say that \textit{smooth} points are those points $x\in X$ for which $\mathcal{O}_{X,x}$ is regular, i.e. isomorphic to a $\mathbb{C}$-algebra $\mathbb{C}\{z_1,\ldots,z_n\}$. Complex spaces with smooth points only are called \textit{complex manifolds}; standard examples are domains in $\mathbb{C}^n$ and Riemann surfaces. Non smooth points are called \textit{singular} points;
simple examples of singular points are the origin of Neil's parabola $y=ax^{3/2}, a\neq 0$ constant, and the origin of the cone $w^2-z_1z_2=0$ in $\mathbb{C}^3$.

\begin{definition}\label{e4}
Let $X$ be a complex space. We say that $X$ is {\it compactifiable} if there are a compact complex space $X'$, a closed analytic
subset $D\subset X'$ and an open embedding $j\colon X\to X'$ such that $j(X)=X'\setminus D$. If in addition we can take $X'$ 
to be a projective variety with $X$ embedded analytically on $X'$
we call it a  {\it projectivizable deformation} of $X$.
\end{definition}

Note that in definition \ref{e4} we do not fix the pair $(X',D)$. Note also that  requiring an algebraic embedding instead 
of an analytic one would result in a different definition.  By Hironaka desingularization 
 if $X$ is a smooth compactifiable complex space, then we may find a compactification
$(X',D,j)$ with $X'$ smooth and $D$ with normal crossings. Note also that by definition 
quasi-projective varieties are compactifiable, but not conversely.

\begin{definition}\label{e5}
Let $X$ be compactifiable complex space. A \emph{compactifiable deformation of $X$} is a sextuple $(\mathcal{X},D,Y,f,o,j)$, where:
\begin{itemize}
\item $\mathcal X$ and $Y$ are complex spaces,  
\item $D$ is a closed analytic subset of $\mathcal X$ and $o\in Y$, 
\item $f\colon  \mathcal X\to Y$ is a proper holomorphic map such that $f_{|\mathcal X\setminus D}$ is
flat,
\item  
$j\colon  X\to f^{-1}(o)$ is an open embedding with
$$j(X) =f^{-1}(o)\setminus D\cap f^{-1}(o).$$ 
\end{itemize}
We say $(\mathcal{X},D,Y,f,o,j)$ is \emph{topologically trivial along $X$}
if $$f_{\mathcal X\setminus D}\colon  \mathcal X\setminus D\to Y$$ is topologically locally trivial over $Y$, 
i.e. for each $u\in Y$ there is an
open neighborhood $A$ of $u$ such that $f_{(\mathcal X\setminus D)\cap f^{-1}(A)}\colon   (\mathcal X\setminus D)\cap f^{-1}(A)\to A$
and the projection $(\mathcal X\setminus D)\cap f^{-1}(A)\times A\to A$ are isomorphic as maps of topological spaces over $A$. 

When
$X$ is smooth, we say that
$(\mathcal X,D,Y,f,o,j)$ is a
\emph{smooth compactifiable deformation} if $f_{|\mathcal X\setminus D}$ is a smooth morphism of complex spaces and each fibre of
$f$ is a complex manifold. If $\mathcal X$ can be chosen to be a  projective variety we say that the deformation is \emph{projectivizable}.
\end{definition}

For a smooth compactifiable deformation we do not assume that $Y$ is a complex manifold. This is the main 
reason to use
topological local triviality in Definition \ref{e5} instead of differential triviality. The interested reader may do the
definition for differential local triviality when $Y$ is smooth.

\begin{example}
Let $X$ be a connected compact manifold of dimension $n>0$. Let $S\subset X$ be a nonempty  finite set and $U=X\setminus S$.
Let  $(\mathcal X,D,Y,f,o,j)$ be a smooth compactifiable deformation of $X$. Since $f$ is proper and the deformation of $X$ is
smooth, all fibres of $f$ are compact manifolds with deformations of $U$ as Zariski open subsets. Restricting $Y$ 
 to a smaller neighborhood of $o$ if needed,  we may assume that all fibres of
$f$ are connected $n$-dimensional complex manifolds and $f_{|D}\colon  D\to Y$ is a finite holomorphic map. Thus all fibres of
$f_{|D}$ are finite sets. It is easy to check that $(\mathcal X,D,Y,f,o,j)$ is topologically trivial along $X$ if and only if all
fibres of $f_{|D}$ have the same number of points.
\end{example}

\begin{question}
Let $X'$ be a compactification of the complex space $X$. 
When are deformations of $X$  obtained from deformations of $X'$ and deformations of the analytic set $D\ce X'\setminus X$?
\end{question}

We  now discuss a few basic examples.

\begin{example}
Let $C$ be a smooth and connected complex curve of genus $g\ge 0$. Take as $D$ a finite set,  $X:= C\setminus D$.
 Set $m:= \sharp (D)$. In such a case deformations of $X$ topologically trivial over the base always come from deformations of $C$. 
 If either $g\ge 2$, or $g=1$ and $m>0$, or else $g=0$ and $m\ge 3$, then we are just speaking about the deformation theory of pairs 
 $(C,D)$, which is the (well-known) deformation theory of elements of $\Mm _{g,m}$. 
\end{example}

\begin{example}\label{Zkexample}
	Let us a consider the following noncompact surfaces
	$$Z_k=\Tot(\mathcal{O}_{\mathbb{P}^1}(-k)).$$
	That is, total spaces of line bundles on $\mathbb{P}^1$. In \cite[Thm.\thinspace5.4 and Thm.\thinspace6.18]{BG}, the authors prove that these surfaces admit smooth compactifiable deformations. In fact, \cite[Lem.\thinspace5.6]{GKRS} show that these deformations can be obtained from deformations of the well known Hirzebruch surfaces $$\mathbb{F}_k=\Proj(\mathcal{O}_{\mathbb{P}^1}(-k)\oplus\mathcal{O}_{\mathbb{P}^1}(0)),$$ 
	which are compact.
\end{example}

\begin{example}\label{Wkexample}
	The noncompact threefolds 
	$$W_k=\Tot(\mathcal{O}_{\mathbb{P}^1}(-k)\oplus\mathcal{O}_{\mathbb{P}^1}(k-2))$$
	admit deformations which are not smooth compactifiable. In \cite[Thm.\thinspace6.3]{GKRS} and \cite[Cor.\thinspace1.29 ]{BG} there are concrete examples of infinite dimensional families of deformations of $W_k$ for $k\geq2$ which are not smooth compactifiable. Further, in \cite[Cor.\thinspace1.25]{BGS} the authors show that deformations of $W_k$ are not obtained from their compactifications. This situation is quite different from the one of the surfaces $Z_k$ considered in the previous example.
\end{example}

\begin{question}
Determine which deformations of $W_k$ are compactifiable. 
\end{question}

\begin{question}\label{qq2}
Describe all projectivizable deformations of  $W_k$. 
\end{question}

\begin{question}
Decide whether there exist nontrivial deformations of $W_1$. Note that 
\cite{GKRS} proved that $W_1$ is formally rigid. 
\end{question}

\begin{question}\label{qq3}
Consider a compactifiable (or projectivizable) deformation $(\mathcal X,D,Y,f,o,j)$   of 
 $X=W_k$. For which $t\in Y$ is there a surjection
${f}^{-1}(t)\to \PP^1$ extending the structure of $\Xx \cap {f}^{-1}(t)$ as 
an affine bundle over $\PP^1$? For which $t\in Y$ is  ${f}^{-1}(t)$ at least covered by 
codimension $1$ projective spaces?
\end{question}

\begin{question}
Does there exist a compactifiable space $X$ such that no nontrivial  deformation of $X$ is compactifiable? 
\end{question}

\begin{question}
Let $X$ be a toric Calabi--Yau threefold. Determine what conditions on $X$ imply that 
all of its deformations are compactifiable (if any).  
\end{question}


\subsection{Nontrivial examples in  dimension one}
To give some very nontrivial examples, we recall some definitions and results from \cite{e}.

\begin{definition} Let $f\colon X\to Y$ be a holomorphic submersion between complex manifolds. $f$ is said to be 
\emph{differentially  locally trivial over $Y$} (resp. \emph{locally trivial over $Y$}) if for each $y\in Y$ there is an open neighborhood $U$
of $y$ such that the map $f_{|f^{-1}(U)}\colon f^{-1}(U)\to U$ 
is differentially (resp. holomorphically) isomorphic to the projection $f^{-1}(y)\times U\to U$.
\end{definition}

If the submersion  $f$ is proper, then it  is differentially  locally trivial over $Y$ by the classical theorem of Ehresmann. 
As mentioned in the introduction, the  complex analogue is false.
Furthermore, the theorem of Fischer and Grauert   \cite{fg} says that if the fibers are compact connected complex manifolds,
then $f$ is holomorphically locally trivial if and only if all the fibers $f^{-1}(y)$ are analytically isomorphic for all $y\in Y$.
The hypothesis of compact fibers is essential here, as  \cite[Ex.\thinspace7]{GR} demonstrates.

\begin{theorem}
\cite[Thm.\thinspace III]{e} \label{e2} If $X$ is a smooth surface, $Y$ a smooth curve, all fibres of $f$ are biholomorphic to an algebraic curve $F$ and $f$ is compactifiable, then $f$ is locally trivial over $Y$.
\end{theorem}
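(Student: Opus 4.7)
The plan is to reduce to the Fischer--Grauert theorem applied to a smooth relative compactification of $f$, and then straighten the complementary divisor using the automorphism group of the projective completion of $F$.

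First, using that $f$ is compactifiable, I would fix data $(\bar{\Xx},D,Y,\bar f,o,j)$ realising $X$ as the complement $\bar{\Xx}\setminus D$ with $\bar f\colon \bar{\Xx}\to Y$ proper holomorphic. Since $F$ is a smooth algebraic curve, it admits a unique smooth projective completion $\hat F$ obtained by adding $m:=\#(\hat F\setminus F)$ points. After applying Hironaka resolution to $\bar{\Xx}$, blowing down $(-1)$-curves in fibres to reach a relatively minimal (and hence, by isotriviality of the smooth part, smooth) model, and shrinking $Y$ to a connected neighbourhood of $o$, one may arrange that $\bar{\Xx}$ is a smooth surface, $\bar f$ is a proper smooth submersion with every fibre biholomorphic to $\hat F$, and $D\to Y$ is finite \'etale of degree $m$. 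The one-dimensionality of the fibres and uniqueness of the smooth projective model of $F$ make these manipulations well-behaved.

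Next I would apply Fischer--Grauert \cite{fg} to $\bar f$: since all its fibres are biholomorphic to the fixed compact curve $\hat F$, there exist a neighbourhood $U\subset Y$ of $o$ and a biholomorphism $\Phi\colon \bar f^{-1}(U)\xrightarrow{\sim} U\times \hat F$ over $U$. Under $\Phi$ the divisor $D\cap \bar f^{-1}(U)$ becomes the graph of a holomorphic map $s\colon U\to \mathrm{Sym}^m(\hat F)$, and every marked pair $(\hat F,s(y))$ is biholomorphic to $(\hat F,D_\infty)$ where $D_\infty:=\hat F\setminus F$. Hence $s$ takes values in a single orbit of $G:=\Aut(\hat F)$. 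A local holomorphic section of the orbit map $G\to G\cdot D_\infty$ produces a holomorphic family $\alpha\colon U\to G$ with $\alpha(y)\cdot s(y)=D_\infty$; composing $\Phi$ with the fibrewise action of $\alpha(y)$ yields a trivialization of $\bar f$ over $U$ carrying $D$ onto $U\times D_\infty$. Restricting to the complement of $D$ gives the required local trivialization $f^{-1}(U)\cong U\times F$.

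The hard part will be the final straightening step in the cases where $G$ is positive-dimensional, namely when $\hat F=\PP^1$ or $\hat F$ is an elliptic curve. There one must verify that $G\cdot D_\infty$ is a locally closed complex submanifold of $\mathrm{Sym}^m(\hat F)$ and that the orbit map admits local holomorphic sections; this follows from properness of the stabiliser action together with standard principal bundle theory, though the argument is best carried out case by case (for $\hat F=\PP^1$ using transitivity of $\mathrm{PGL}_2$ on ordered tuples of up to three distinct points, and for an elliptic curve using the translation action). In the remaining hyperbolic cases of genus $\ge 2$, $\Aut(\hat F)$ is finite, so $s$ is locally constant on connected $U$ and the straightening is automatic.
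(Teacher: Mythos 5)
The paper itself offers no proof of this statement: it is quoted directly from Elencwajg \cite[Thm.~III]{e}, so there is nothing in the text to compare your route against, and your proposal has to stand on its own. Its second half does: once you have a proper smooth family $\bar f$ with constant fibre $\hat F$ and an \'etale degree-$m$ multisection $D$ cutting out $F$ in each fibre, Fischer--Grauert \cite{fg} plus a local holomorphic section of the orbit map $\Aut(\hat F)\to \Aut(\hat F)\cdot D_\infty$ (finite automorphism group in genus $\ge 2$, the $\mathrm{PGL}_2$ and translation actions otherwise) does straighten $D$ and trivialize $f$.

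The gap is the first half. The reduction ``one may arrange that $\bar f$ is a proper smooth submersion with every fibre biholomorphic to $\hat F$ and $D\to Y$ finite \'etale'' is not a routine consequence of Hironaka plus relative minimal models --- it is essentially the entire content of the theorem, and from the input you actually use (a proper holomorphic extension $\bar f$ with $X=\bar{\Xx}\setminus D$, $D$ analytic) it is false. Example~\ref{e3} of this paper is the test case: $X=\CC^2\setminus T$ with $T=\{(0,0)\}\cup\{(z,z^{-1})\}_{z\ne 0}$ extends to the proper smooth submersion $\mathrm{pr}_1\colon\CC\times\PP^1\to\CC$ with boundary $D=\{(0,0)\}\cup\overline{\{zy=1\}}\cup(\CC\times\{\infty\})$; every compact fibre is $\PP^1$, every open fibre is $\CC^*$, and $D\to\CC$ even has exactly two points over every point of the base --- yet $f$ is not even topologically locally trivial. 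What fails is precisely your \'etaleness claim: the two horizontal components of $D$ cross at $(0,\infty)$ while an isolated boundary point appears at $(0,0)$, so the multisection $z\mapsto\{z^{-1},\infty\}\in\mathrm{Sym}^2(\PP^1)$ degenerates to the non-reduced divisor $2[\infty]$ as $z\to 0$ instead of converging to $\{0,\infty\}$, and no birational modification over $\CC$ repairs this (blowing up separates the branches but inserts a vertical component of $D$; contracting it reinstates the isolated point). Run on this example, your argument ``proves'' a false statement. To close the gap you must (i) work with Elencwajg's actual, stronger notion of compactifiable rather than the bare existence of a proper extension, and (ii) show that under it the boundary punctures can neither collide nor jump between fibres, so that $D$ genuinely becomes an \'etale multisection and all compact fibres are $\hat F$ (note also that relatively minimal isotrivial models in genus $\ge 1$ may carry multiple fibres, which your parenthetical ``hence smooth'' does not address). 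That is where the compactifiability hypothesis does its work and where the case analysis on $F$ really enters.
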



We recall that a connected one-dimensional complex manifold
$F$ is of finite type if and only if there is a compact complex one-dimensional manifold $T$ and a holomorphic embedding $j\colon F\to T$ such that $\overline{j(F)}\setminus F$
is topologically the union of finitely many disjoint circles and points. The following very strong result is due to K. Yamaguchi: 

\begin{theorem}\label{y1}
\cite[Thm.\thinspace3]{y} Let $f\colon W\to Y$ be a holomorphic submersion with $Y$ a one-dimensional complex manifold, $W$ a Stein surface and all fibres of $f$ biholomorphic to the same connected one-dimensional complex manifold with finite type $F$. Assume that $F$ is neither biholomorphic to the open disc nor to the open disc minus its center.
Then $f$ is holomorphically a locally trivial fibration over $Y$.
\end{theorem}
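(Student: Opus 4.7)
The plan is to reduce the problem to the classical Fischer--Grauert theorem \cite{fg} by constructing, locally on the base $Y$, a holomorphic compactification of the family with fibres biholomorphic to a fixed compact Riemann surface. Fix a point $y_0 \in Y$ and a small disc $U \subset Y$ around $y_0$. The finite type hypothesis on $F$ gives a compact Riemann surface $T$ together with a holomorphic embedding $\iota\colon F \hookrightarrow T$ such that $T\setminus \iota(F)$ is a disjoint union of finitely many puncture points $p_1,\dots,p_s$ and finitely many real-analytic boundary circles $\gamma_1,\dots,\gamma_r$.

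The key step is to construct a proper holomorphic submersion $\tilde f\colon \widetilde W \to U$ together with an open embedding $f^{-1}(U) \hookrightarrow \widetilde W$ over $U$, such that each fibre of $\tilde f$ is biholomorphic to $T$. The Stein hypothesis on $W$ enters in two complementary ways here. Near each puncture end of each fibre, the induced holomorphic family of punctured discs over $U$ must be filled in across the punctures: Steinness yields enough global holomorphic functions on an annular neighbourhood of each end curve to define a Hartogs-type filling and a holomorphic section of $\tilde f$ through each puncture. Near each boundary circle end, the family of conformal half-annuli must be extended to a family of full annuli: Steinness, combined with real-analyticity of the circles $\gamma_k \subset T$, permits a holomorphic extension across each $\gamma_k$ obtained by reflection in the real-analytic boundary together with uniform control of the conformal moduli of the fibrewise annular collars. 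Once $\tilde f$ is built, all its fibres are biholomorphic to the same compact Riemann surface $T$, so Fischer--Grauert \cite{fg} provides a holomorphic trivialization $\widetilde W|_V \cong V\times T$ on a smaller neighbourhood $V \subset U$ of $y_0$, and restriction of this trivialization to $f^{-1}(V) \subset \widetilde W|_V$ yields the desired local trivialization of $f$.

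The hardest point is the compactification step, and this is precisely where the two excluded cases obstruct the argument. When $F = \mathbb{D}$, the unique end is a disc end with no intrinsic conformal modulus, so the family can carry genuine moduli of ``radius'' over $Y$ and no canonical holomorphic filling exists; when $F = \mathbb{D}\setminus\{0\}$, the inner and outer ends are both disc ends, the embedding $\iota$ is non-unique up to a biholomorphism of $T$ preserving $F$, and again there is no canonical way to fill in the inner puncture holomorphically in families. In every other finite-type case, each end carries either a distinguished conformal modulus (for genuine annulus ends) or belongs to a fibre whose automorphism group acts discretely near the puncture, providing the rigidity needed to transport the end data holomorphically across $U$ by Stein extension. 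Carrying out this rigidity argument uniformly in $y\in U$, and verifying that the extended family $\tilde f$ is genuinely a proper holomorphic submersion rather than merely a continuous family of surfaces, is the crux of the proof.
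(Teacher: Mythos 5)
First, a point of comparison: the paper does not prove this statement at all --- it is quoted as Theorem~3 of Yamaguchi \cite{y}, whose actual proof belongs to the Nishino--Yamaguchi circle of ideas and rests on the plurisubharmonic variation, in the base parameter, of fibrewise potential-theoretic quantities (Green functions, Robin constants, harmonic moduli of the ends) on a Stein total space. It does not proceed by compactifying the family, so your route is genuinely different from the source and must be judged on its own merits.

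As written it has two gaps, and the second is fatal to the strategy. (1) The relative compactification $\widetilde{f}\colon \widetilde{W}\to U$ is never actually constructed. For ideal-boundary-circle ends you invoke ``reflection in the real-analytic boundary,'' but the circles $\gamma_k$ live only in the abstract model $T$, not in $W$: the fibres of $f$ are open surfaces with no boundary present to reflect across, and choosing a collar identification with a standard annulus fibre by fibre gives no reason for the identifications to depend holomorphically on $y$. For puncture ends you assert that Steinness produces holomorphic sections of $\widetilde{f}$ through the punctures; that is precisely the hard point, not a consequence of a Hartogs argument. Example~\ref{e3} (Elencwajg) is the test case: there $F\cong\CC^\ast$ satisfies every hypothesis except Steinness of the total space, the fibrewise compactification by $\PP^1$ exists, and the two punctures of the fibre over $z\ne 0$ are $\infty$ and $z^{-1}$, which collide as $z\to 0$, so the puncture ``sections'' are not even continuous. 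Any correct proof must show that Steinness forbids exactly this degeneration, and your sketch asserts rather than proves it. (2) Even granting a proper compactification with all fibres biholomorphic to $T$, Fischer--Grauert \cite{fg} trivializes $\widetilde{W}$, not $W$: under a trivialization $\widetilde{W}|_V\cong V\times T$ the original family becomes a family of open subsets $\Omega_y\subset T$, each biholomorphic to $F$, and nothing forces $y\mapsto\Omega_y$ to be locally constant up to automorphisms of $T$. Again Example~\ref{e3}, with $T=\PP^1$ and $\Omega_z=\PP^1\setminus\{\infty,z^{-1}\}$, shows that this reduction loses exactly the content of the theorem; one would need a Fischer--Grauert statement for the pairs $(T,\,T\setminus\Omega_y)$, which is unavailable when the complement contains circles. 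So the proposal is a plausible heuristic but not a proof, and where it would have to do real work is exactly where it defers to unproved claims.
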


Thus here one does not need to assume that the submersion is compactifiable, but \cite{y} requires 
a very strong assumption (Steinness) on the total space of the holomorphic submersion $f$. 
We just recall the following example from \cite{e}. Other examples of nontrivial holomorphic fibrations 
 are contained in \cite{y} and the references quoted therein.

\begin{example}\label{e3}
\cite[Ex.\thinspace1.2.1]{e} Set $T := \{(0,0)\}\cup \{(z,z^{-1})\}_{z\in \CC^\ast} \subset \CC^2$. Set $X:= \CC^2\setminus T$. Let $f: X\to \CC$ denote the restriction to $X$ of the projection of $\CC^2$ onto its factors. All fibres of the submersion $f$ are biholomorphic to $\CC^\ast$, but $f$ is neither differentiably nor topologically locally trivial over $\CC$.
Note that $X$ is not Stein.
\end{example}

\section{Deformations of $q$-concave spaces}
We use the notation from \cite{hl}. For  extensions of these definitions to complex spaces, see for instance \cite[Ch.\thinspace5]{and2}. 
Let $X$ be a connected complex manifold of dimension $n$ and   $x\in X$. 
Then $X$ is a $2n$-dimensional differentiable manifold and we call $T_{\RR,x}X$ its real tangent space, 
which is a $2n$-dimensional real vector space. Set 
$$T_{\CC,x}X:= T_{\RR,x}X\otimes \CC.$$ 
Thus $T_{\CC,x}X$ is a $2n$-dimensional complex vector space. If $z_1,\dots ,z_n$ are local coordinates of $X$ around $x$, 
 we may take $\frac{\partial}{\partial z_1},\dots ,\frac{\partial}{\partial z_n},\frac{\partial}{\partial \bar{z}_1}, \dots, \frac{\partial}{\partial \bar{z}_n}$
as a basis of $T_{\CC,x}X$. Let $T'_xX$ (resp. $T''_xX$) 
be  the subspace of $T_{\CC,x}X$ spanned by $\frac{\partial}{\partial z_1},\dots ,\frac{\partial}{\partial z_n}$ (resp. $\frac{\partial}{\partial \bar{z}_1}, \dots, \frac{\partial}{\partial \bar{z}_n}$) \cite[p.\thinspace59]{hl}.  If $Y\subset X$ is a real $C^1$ submanifold of $X$ 
 we set
$T'_xY:= T'_xX\cap T_{\CC,x}Y$.

\begin{definition}
Let $\rho\colon X\to \RR$ be a $C^2$ function. For any $x\in X$ we define a
 Hermitian form $L_\rho(x)$ on $T'_xX$, called the \emph{Levi form} of $\rho$,
in the following way. If $(t_1,\dots ,t_n)\in \CC^n$ and $t = \sum _{j=1}^{n} t_j\frac{\partial}{\partial z_j}$ set
$$ L_\rho (x)(t) := \sum _{j,k=1}^{n} \frac{\partial ^2\rho (x)}{\partial \bar{z}_j\partial \bar{z}_k}\bar{t}_jt_k.$$
\end{definition}

The signature (also called the inertia) of the Levi form at $x$ does not depend on the choice of the holomorphic local coordinates 
$z_1,\dots ,z_n$,
see \cite[pp.\thinspace59-60]{hl}, \cite[p.\thinspace61]{and2}.

\begin{definition}
Fix an integer $q$ such that $1\le q\le n$. We say that a $C^2$ function $\rho$ is \emph{$q$-convex} if at all $x\in X$ 
the Levi form of $\rho$ has at least $q$ positive eigenvalues \cite[p.\thinspace60]{hl}
and that it is \emph{$q$-concave} if $-\rho$ is $q$-convex, i.e. if at all points of $X$ 
the Levi form of $\rho$ has at least $q$ negative eigenvalues \cite[p.\thinspace73]{hl}. 

A $C^2$-function $\rho$ is called \emph{pseudoconvex} (resp \emph{pseudoconcave}) if it is $(n-1)$-convex (resp. $(n-1)$-concave).
The  $n$-convex functions are those usually called strictly plurisubharmonic.
\end{definition}

\begin{definition}
Let $\rho\colon  X\to \RR$ be a $C^2$ function. Set $\beta := \sup _{z\in X} \rho(z)  \in \RR\cup \{+\infty\}$. The function $\rho$ is called an \emph{exhausting function} if there is a compact subset $K\subseteq X$ such that
$\rho (x) < \beta $ for all  $x\in X\setminus K$ and for all $c<\beta$ the set $\{z\in X\mid \rho (z)<c\}$ is relatively compact in $X$
\cite[I.5.1]{hl}. 

The exhausting function $\rho$ is called \emph{$q$-concave at infinity} if there is a compact $K\subset X$ 
with $X \setminus K \neq \emptyset$ and such that $\rho _{|X\setminus K}$ is $q$-concave.
\end{definition}

\begin{definition}
 We say that an $n$-dimensional  complex manifold
$X$ is \emph{$q$-concave} (resp. $q$-convex) for  $0\le q\le n-1$, if there exists an exhausting function of $X$, which is $(q+1)$-concave 
(resp. $q$-convex) at infinity \cite[p.\thinspace73, p.\thinspace 65]{hl}.
\end{definition}

Now assume that $X$ is a connected open subset of the connected complex manifold $W$. Fix $q\in \{1,\dots ,n-1\}$. In
\cite[pp.\thinspace138--139]{hl} it is defined when $W$ is a $q$-concave  extension of $X$.  We recall the basic idea of the definition, 
referring the reader to \cite{hl} for technical details. A $q$-concave extension in $X$ is defined locally in domains, in the same spirit 
as the manifold is obtained by local charts. The choices of domains $V \subset X$ are typically quite delicate and require satisfying a series 
of rather technical conditions.  

\begin{definition}
Assuming  a domain $V\subset X$ has been appropriately chosen,  consider 
$A_1 \subset A_2$ such that  $A_2\setminus A_1 \subset V$. We say that $A_2 $ is  a \emph{$q$-concave extension} of $A_1$ in $X$
provided there exists a local chart $\psi\colon V \rightarrow  U \subset \mathbb C^n$ such that  $U$ is a $q$-concave domain in $\mathbb C^n$
and there exists a $q+1$-concave function $\rho\colon U \rightarrow \mathbb R$ of class $C^2$
such that $\psi(A_2\setminus A_1)= \{z \in U: \rho(z) <0\}$ 
is relatively compact in $U$ and forms a $q$-concave configuration in $\mathbb C^n$ (see \cite[Def.\thinspace 13.1]{hl}).
\end{definition}

Intuitively, we may think of a $q$-concave manifold as being concave in $q$ dimensions, and similar for $q$-convex. 
Concave and convex extensions can be thought of as being represented by the following pictorial situation. 
Suppose a physical space $X$ is an extension of a proper subspace $D \subset X$ and an observer is positioned in 
$X\setminus D$, then: 
\begin{enumerate}
\item[(Fig.\ref{fig:q-concave-extension})] 
If $X$ is a concave extension of $D$, then the observer sees $D$  as concave. 
\item[(Fig.\ref{fig:q-convex-extension})] 
If $X$ is a convex extension of $D$, then the observer sees  $D$  as convex. 
\end{enumerate}
Note however that the notions of $q$-concave and $q$-convex extensions of $D$ are concepts 
referring to a ``small" neighborhood of $D$ and do not imply that $X$ is either concave or convex itself,
as the pictures show.

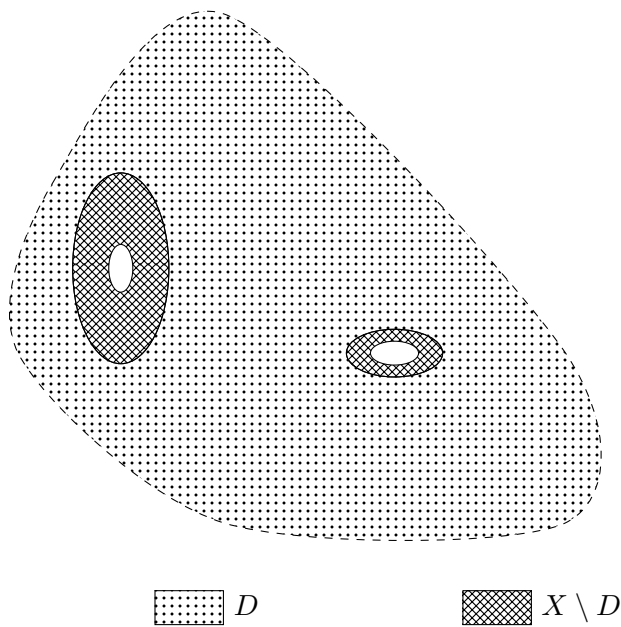
\begin{figure}
\begin{tikzpicture}[scale=4.5]
\filldraw[dashed,draw=black,pattern=dots]
plot[smooth cycle] coordinates{
	(0.4,1)(0.5,1.5)(1,2)(2,1)(2,0.5)(1,0.5)};
\draw[fill=white] (0.7,1.25) ellipse (4pt and 8pt);
\draw[fill=white] (1.5,1) ellipse (4pt and 2pt);
\draw[pattern=crosshatch] (1.5,1) ellipse (4pt and 2pt);
\draw[fill=white] (1.5,1) ellipse (2pt and 1pt);
\draw[pattern=crosshatch] (0.7,1.25) ellipse (4pt and 8pt);
\draw[fill=white] (0.7,1.25) ellipse (1pt and 2pt);
\filldraw[pattern=dots] (0.8,0.2) rectangle (1,0.3);
\filldraw[pattern=crosshatch] (1.7,0.2) rectangle (1.9,0.3);
\draw (1,0.25) node[right] {$D$};
\draw (1.9,0.25) node[right] {$X\setminus D$};
\end{tikzpicture}
\caption{$X$ is a $q$-concave extension of $D$.}
\label{fig:q-concave-extension}
\end{figure}
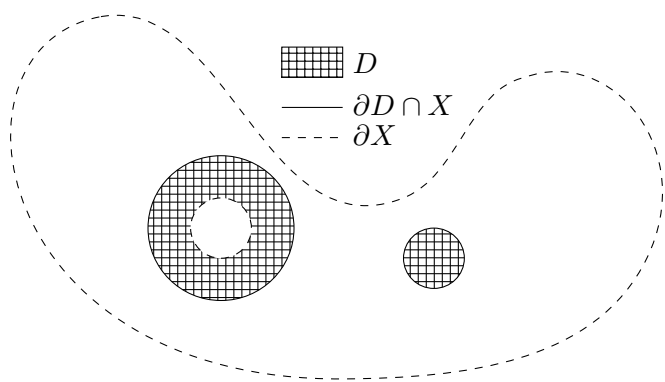
\begin{figure}
\begin{tikzpicture}[rotate=270,scale=0.08]\draw[dashed] (0, 0) .. controls (5.18756, -26.8353) and (60.36073, -18.40036)
.. (60, 40) .. controls (59.87714, 59.889) and (57.33896, 81.64203)
.. (40, 90) .. controls (22.39987, 98.48387) and (4.72404, 84.46368)
.. (10, 70) .. controls (12, 60.7165) and (26.35591, 59.1351)
.. (30, 50) .. controls (39.19409, 26.95198) and (-4.10555, 21.23804)
.. (0, 0);
\draw (35,20) circle (12);
\fill[pattern=horizontal lines] (35,20) circle (12);
\fill[pattern=vertical lines] (35,20) circle (12);
\fill[fill=white] (35,20) circle (5);
\draw[dashed] (35,20) circle (5);
\draw (40,55) circle (5);
\fill[pattern=horizontal lines] (40,55) circle (5);
\fill[pattern=vertical lines] (40,55) circle (5);
\draw [pattern=grid] (5,40) rectangle (10,30);
\draw (8,40) node[right] {$D$};
\draw (15,30) -- (15,40); 
\draw (15,40) node[right] {$\partial D\cap X$};
\draw [dashed](20,30) -- (20,40); 
\draw (20,40) node[right] {$\partial X$};
\end{tikzpicture}
\caption{$X$ is a $q$-convex extension of $D$.}
\label{fig:q-convex-extension}
\end{figure}

\begin{remark}\label{and1}
Let $W$ be a connected complex manifold that is a $q$-concave extension of its connected open subset $X\subset W$.
 Assume that $X$ is $q$-concave. Then the
restriction map $H^i(W,TW) \to H^i(X,TX)$ is an isomorphism for $i\le q-1$  by \cite[IV.15.11]{hl} and  is
injective if $i= q$ by \cite[IV.16.1]{hl}, since the
Dolbeault and  the usual cohomology groups coincide by
\cite[Thm.\thinspace I.2.14]{hl}.
\end{remark}

\begin{definition}
An $n$-dimensional $(n-1)$-concave complex manifold is  called a \emph{pseudoconcave} complex manifold.
\end{definition}

\begin{example}
Every compact complex space is trivially pseudoconcave. 
 More examples are easily obtained by  taking ``convex holes'' in a compact space.
The compactification problem (solved by H. Rossi) says that if a space $X$ is pseudoconcave and $\dim X\ge 3$, 
then $X$ is an open subset of a compact space (it may be singular) \cite[\S\thinspace23]{hl} which is an  
$(n-1)$-concave extension of $X$, hence one can ``fill in" convex holes in  $\dim \ge 3$.
\end{example}

\begin{example}
Not every $2$-dimensional pseudoconcave space is compactifiable. Indeed,
\cite[Ex.\thinspace3]{and2}
shows how to construct a pseudoconcave structure on $\mathbb P^2\setminus \{p\}$ which is not compactifiable. 
 \end{example}

Thus, it is not very restrictive (but restrictive) to only look at $q$-concave complex manifolds with a compact $q$-concave extension.
Here we take a pair $(W,X)$ with $X$ $q$-concave, $W$ a compact complex manifold and we discuss 
smooth $q$-concave deformations. For simplicity, in what follows we assume that the parameter space $Y$ is smooth 
and also that the total space of the deformation $\Xx$ is smooth.

\begin{definition}
A  \emph{ $q$-concave deformation} of the pair $(W,X)$ is a quintuple $(\Xx,\Ww,Y,o,f)$,  satisfying:
\begin{itemize}
\item $Y$ is a complex space, and $o\in Y$, 
\item $f\colon \Ww \to Y$ is a proper submersion, 
\item $\Xx$ is an open subset of $\Ww$, 
\item $(f^{-1}(o), f^{-1}(o)\cap \Xx)$ is biholomorphic to the pair $(W,X)$, and 
\item $f^{-1}(y)$ is a $q$-concave extension of $f^{-1}(y)\cap \Xx$  for each $y\in Y$.
\end{itemize}
\end{definition}

The last requirement means that 
$f_{|\Xx}$ is transversally $q$-concave, that is, 
 $\Ww$ is a $q$-concave extension of $\Xx$ in which the data of the $q$-extension is transversal to $f$.

When $f_{|\Xx}$ is topologically locally trivial, we say that the deformation is \emph{topologically  locally 
trivial} over $Y$. When $Y$ is a smooth manifold and   $f_{|\Ww}$ is differentially locally trivial over $Y$
we say that the deformation is \emph{differentially locally trivial} over $Y$.

Now assume $q\ge 3$. By Remark \ref{and1} the restriction map 
$$H^i(W,TW) \to H^i(X,TX)$$ is an isomorphism for each $i\leq 2$ and is an isomorphism in degree $3$ if $q>3$.
 Since these are the cohomologies used in  Kodaira--Spencer theory, it then follows that  of $q \geq 3$ 
 the deformation theory of $W$ is governed by data on $X$. 
 For instance, if $H^1(X,TX)=0$, then in any $q$-concave deformation $(\Xx,\Ww,Y,o,f)$, $\Ww$ is locally trivial over $Y$
and hence we just ask how to deform $X$ inside $W$ (if we only want $q$-concave deformations of $(W,X)$).

Take a $q$-concave deformation $(\Xx,\Ww,Y,o,f)$ of the pair $(W,X)$ with $Y$ smooth and let $E$ be a holomorphic vector bundle on $X$. 
By \cite[Thm.\thinspace3 \S\thinspace8  p.\thinspace123]{RaRu}
there is a range of integer $i\in \NN$,  $q$  and  $\dim Y$ for which the direct images $R^if_\ast (E)$ are coherent. 
However, to study deformations, we only need the case $E =T\Xx$
and in this case $ E$ is the restriction to $\Xx$ of the vector bundle $T\Ww$. Since $f\colon \Ww \to Y$ is proper,
 Grauert theorem of coherence for proper maps says that for all $i\in \NN$ the $\Oo_Y$-sheaf, $R^if_\ast (T\Ww)$ is coherent, each cohomology group $H^i(f^{-1}(y),Tf^{-1}(y))$ is finite-dimensional and the function $y\mapsto \dim H^i(f^{-1}(y),Tf^{-1}(y))$
is an upper semicontinuous function $Y\to \NN$.  We then obtain the following result:

\begin{proposition} Let $(W,X)$ be a $q$-concave pair, $q\ge 3$. Then all $q$-concave deformations $(\Xx,\Ww,Y,o,f)$ of  $(W,X)$ with $Y$ smooth are obtained from deformations of $X$. 
\end{proposition}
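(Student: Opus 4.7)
My plan is to combine the cohomological comparison from Remark \ref{and1} with Kodaira--Spencer theory and the Grauert coherence result quoted just above the statement. Because $f\colon\Ww\to Y$ is a proper submersion with both $\Ww$ and $Y$ smooth, the standard Kodaira--Spencer map
\[
\kappa_\Ww\colon T_oY\longrightarrow H^1(W,TW)
\]
classifies infinitesimal deformations of $W=f^{-1}(o)$, with obstructions to first-order lifts living in $H^2(W,TW)$. Grauert's theorem of coherence gives finite-dimensionality of the relevant cohomologies in a neighbourhood of $o\in Y$. The restricted submersion $f_{|\Xx}\colon\Xx\to Y$ is a (non-proper) family with central fibre $X$, and it carries its own Kodaira--Spencer map $\kappa_\Xx\colon T_oY\to H^1(X,TX)$.

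Next I would compare $\kappa_\Ww$ with $\kappa_\Xx$. Since $\Xx\subset\Ww$ is open, restriction of a $\bar\partial$-cocycle representative of the Kodaira--Spencer class commutes with the restriction maps $\rho_i\colon H^i(W,TW)\to H^i(X,TX)$, yielding
\[
\rho_1\circ\kappa_\Ww=\kappa_\Xx,
\]
and similarly $\rho_2$ intertwines the obstruction maps. By Remark \ref{and1}, $\rho_i$ is an isomorphism for $i\le q-1$, so for $q\ge 3$ both $\rho_1$ and $\rho_2$ are isomorphisms. Hence the Kodaira--Spencer class and obstructions of the deformation $\Ww\to Y$ are completely recovered from those of $\Xx\to Y$: any infinitesimal deformation of $W$ inside $\Ww$ is uniquely determined by the corresponding infinitesimal deformation of $X$ inside $\Xx$, and conversely.

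Finally, to pass from the infinitesimal statement to the actual family, I would appeal to a Kuranishi-type (semi)versal deformation space $\mathrm{Def}(W)$ for the compact manifold $W$, together with the analogous versal base for $X$ in the $q$-concave setting; the isomorphisms $\rho_1,\rho_2$ identify these bases, so the classifying map $Y\to \mathrm{Def}(W)$ is recovered from $Y\to\mathrm{Def}(X)$, and hence $\Ww\to Y$ is reconstructed from $\Xx\to Y$. The main obstacle I anticipate is precisely this last step: one must either build a truly versal deformation theory for the noncompact $q$-concave $X$, or implement the reconstruction iteratively via Schlessinger-style obstruction calculus on infinitesimal neighbourhoods of $o\in Y$, using $\rho_1,\rho_2$ to supply both existence and uniqueness of each infinitesimal lift while checking that the transversality of the $q$-concave configuration to $f$ is preserved throughout.
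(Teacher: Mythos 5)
Your core input is exactly the paper's: Remark \ref{and1} makes the restriction maps $H^i(W,TW)\to H^i(X,TX)$ isomorphisms for $i\le q-1$, so for $q\ge 3$ the degrees $1$ and $2$ relevant to Kodaira--Spencer theory are covered. In that sense you take the same route. But the paper's recorded proof is far more modest than what you propose: it consists of the single fiberwise observation that
$$\dim H^i(f^{-1}(y),Tf^{-1}(y)) = \dim H^i(f^{-1}(y)\cap \Xx,\,Tf^{-1}(y)\cap \Xx)$$
for $i\le q-1$, applied to each fibre of the family, and nothing more. Everything you add --- the identification $\rho_1\circ\kappa_\Ww=\kappa_\Xx$ of Kodaira--Spencer classes, the intertwining of obstruction maps, and especially the final passage from infinitesimal data to the actual family via a versal base for the noncompact $q$-concave $X$ --- goes beyond what the paper attempts. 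The obstacle you flag at the end is genuine: the statement ``obtained from deformations of $X$'' is left informal in the paper, and the existence of a versal deformation space for a noncompact $q$-concave manifold is not established there, nor is the compatibility of the classifying maps. So your proposal is a more honest and more complete account of what a full proof would require; the paper simply treats the cohomological identification as the entire content of the claim. If you carry out your last step you will have proved something strictly stronger than what the paper verifies.
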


\begin{proof} Using Remark \ref{and1}
we have 
$$ \dim H^i(f^{-1}(y),Tf^{-1}(y)) =  \dim H^i(f^{-1}(y)\cap \Xx,Tf^{-1}(y)\cap \Xx)$$ for all $i\le q-1$.
\end{proof}

Let $X$ be an $n$-dimensional connected complex space, $n\ge 3$. For simplicity we assume here that $X$ is smooth and relegate
to Remark \ref{sing1} the case of singular complex spaces. 
Assume that
$X$ is pseudoconcave, i.e. that it is $(n-1)$-concave \cite{and1,and2,hl}. 

\begin{remark}
Even for projective manifolds $X$ it is 
classically known (e.g. K3 surfaces, i.e. a compact $2$-dimensional Calabi-Yau manifold)  that small deformations of
$X$ may not be projective.
\end{remark}

 The usual way to remain in the category of projective manifolds is  not just to consider deformations of
$X$, but instead to consider deformations of a pair $(X,L)$, where $L$ is an ample line bundle on $X$. In this section we will do the
same for pseudoconcave complex manifolds. We follow \cite{and2,as,at}. 

Assume the existence of  a line bundle $L$ on $X$
such that for each $x\in X$, there is a positive integer $m$ and $f_1,\dots ,f_n\in H^0(X,L^{\otimes m})$ such that $f_i(x)=0$
for all $i$ and
(after taking a local trivialization of $L$ around $x$), $f_1,\dots ,f_m$ induce a local biholomorphism of $X$ near $x$
with $\CC ^n$ near $0$.  When $n=2$ one needs also to  assume that for all $x, y\in X$ such that $x\ne y$ there is a positive
integer $m$ and $f\in H^0(X,L^{\otimes m})$ such that $f(x)=0$ and $f(y) \ne 0$. Under these  assumptions
the $\CC$-algebra 
$$A(X,L) := \oplus _{m\ge 0} H^0(X,L^{\otimes m})$$
 is finitely generated and hence the scheme
$$\Gamma:= \mathrm{Proj}(A(X,L)$$
 is a well-defined complex projective scheme \cite{hart}. Since $L$ is a line bundle on $X$
and it is known that for each $x\in X$ there is a positive integer $m$ and $f\in H^0(X,^{\otimes m})$ with $f(x)\ne 0$, the
pair $(X,L)$ induces a holomorphic map 
$$u\colon X\to \Gamma.$$ 
Then $X$ is an integral complex space (for manifolds it just means
that $X$ is connected). Since $X$ is smooth, it is a normal complex space \cite[Thm.\thinspace p.\thinspace120]{fischer}. 
Let $v\colon W\to
\Gamma$ be the normalization of
$\Gamma$ as a complex space \cite[\S\thinspace2.26]{fischer}, which is just the complex space associated to the normalization of
$\Gamma$ as an integral algebraic variety. We get that
$u$ factors as
$$u = v\circ j  \quad \text{with}  \quad j\colon X\to W$$
 \cite[pp.\thinspace121-122]{fischer}. It is proved in \cite{and2,as,at} that $j$ is
an open embedding and in the following we will identify $j$ with the identity map, i.e. we will see $X$ as an open subset of
the normal projective complex space $W$. Examples contained in the quoted paper show that for $X$ smooth $W$ need not be
smooth, but it has at most finitely many singular points. 

By the construction of
$\Gamma$ as a $\mathrm{Proj}$ there exists a natural line bundle on it and its pull-back $R$ to $X$ has the property that it is
ample and $R_{|X} =L$. Taking instead of $L$ a power $L^{\otimes m}$, $m\gg 0$, we may assume that $R$ induces an embedding of
$W$ into a large projective manifold. Using the definition of projective morphisms between two complex spaces
\cite[Ch.\thinspace4]{bs2} we may define projective deformations of the pair $(X,R)$ in the following way.
Take $W$ and $R$ as above. 

\begin{definition}
A {\it projective deformation} of $(X,L,W,R)$ is given by $(\Ww,Y,f,o,\Xx, \Rr)$ such that:
 \begin{itemize}
\item  $\Ww$ and $Y$ are complex spaces,
\item  $o\in Y$ and $f^{-1}(o) =W$,
\item $f\colon 
\Ww
\to Y$
is a holomorphic submersion which is projective in the sense of 
\cite[Ch.\thinspace4]{bs2},
\item  $\Xx$ is an open subset of $\Ww$,
\item  $f_{|\Xx}\colon \Xx \to Y$ is pseudoconcave,
\item $f^{-1}(o)\cap \Xx =X$, 
\item $\Rr$ is a line bundle on $\Ww$ which is ample with respect to $f$ and 
\item $\Rr _{|f^{-1}(o)}
\cong R$.
\end{itemize}
\end{definition}

Note that by definition $f$ is projective if
and only if there is a line bundle on $\Ww$ which is ample with respect to $f$  \cite[p.\thinspace 159]{bs2}.

\begin{remark}
Assume $n\ge 3$ and hence that $W$ is a $2$-concave extension of $X$. Assume that $W$ is smooth and that $H^1(TX)=0$. Since
$H^1(W,TW) =H^1(X,TX)$, $W$ is rigid. We get a kind of rigidity for pseudoconcave deformations of the pair $(X,L)$: the only
nearby pseudoconcave deformations are as open subsets of $W$ for the following reason. 
Since $W$ is rigid, any small deformation of $W$ is biholomorphic to the projection 
$\pi _2\colon W\times \Delta \to \Delta$ with $\Delta$ an open neighborhood of $0\in \CC^N$, 
some $N\ge 0$. By the definition of pseudoconcave deformation such a small deformation is obtained taking an open subset $\Xx$
of $W\times \Delta$ such that $\Xx \cap \pi _2^{0} =W$ and $\pi _{2|\Xx} \Xx \to \Delta$ is a submersion.
 Thus, this submersion is equivalent to taking a family $\{\pi _2^{-1}(z)\cap \Xx\}_{z\in \Delta}$ of open subsets of $W$ with $\pi _2^{-1}(0) =X$. \end{remark}

Now we may take as a pseudoconcave deformation of $(X,L)$ any family  with $\Xx$ an open subset of $\Ww$ such that $\Xx \cap
f^{-1}(o) =X$ and $f_{|\Xx}$ is pseudoconcave.

\begin{remark}\label{sing1}
For general background on singular complex spaces, see \cite{fischer}. See \cite[\S\thinspace2.6, 2.7]{fischer} for the differential of a
holomorphic map between arbitrary complex spaces and in particular for the notion of submersion between complex spaces
\cite[\S\thinspace2.18]{fischer}. For the notions of $q$-convexity and $q$-concavity for complex spaces, see for instance \cite[\S\thinspace
5.2]{and2}
\end{remark}

\begin{question}\label{qq1}
Let $X$ be a pseudoconcave complex space. When is there a compact complex space $W$ and an open embedding 
$j\colon X\to W$ such
that $W\setminus j(X)$ is ``very small'', for instance a finite set?
\end{question}

One could think of  Question \ref{qq1} as a particular case of the following vague question.

\begin{question}\label{vague}
Let $X$ be a complex space. Find an object, say a compact topological space, 
$\overline{X}$ containing $X$ with $X$ open in $\overline{X}$ and for which certain 
sequences (or all sequences) of points of $X$ have a subsequence converging to a point of 
$\overline{X}$, then give a structure of complex space to $\overline{X}$. 
Classify those complex spaces $X$ such that $\overline{X}\setminus X$ is small, e.g. it is a finite set. 
\end{question}

Such a  program was carried out  for  noncompact one-dimensional complex manifolds in several ways,
 using non-holomorphic data, like the existence or non-existence of certain potential \cite[pp.\thinspace20--21]{rs}, 
\cite{ahs}, at least if $\pi _1(X)$ is finitely generated 
 then giving several different criteria which determine when $\overline{X}\setminus X$ is small.

 When $n:= \dim X\ge 2$ and $X$ is smooth, a related notion is the one of envelope of holomorphy, 
 in which we assume that $X$ has many nonconstant holomorphic functions and
  the aim is to find $\overline{X}$ which is Stein. This very old problem (see \cite{jp} for the case of Riemann domains) is discussed
in details in \cite[\S\thinspace11.7.3, \S\thinspace13.7]{dsst}.

We consider here the case of  a compact projective $W$ which is 
normal with only finitely many singular points  (this is similar to \cite[case (b) of Proposition 6]{at}). 
Then we expect that to verify when $W\setminus j(X)$ is finite should not be very difficult, 
although we do not have an obvious criterion to see it. We have:

\begin{proposition}\label{0conv}
Let $X$ be an $n$-dimensional pseudoconcave complex manifold. Assume that one of the following conditions
is satisfied:
\begin{enumerate}
\item either $n\ge 3$ and for each $x\in X$ there exists an integer $t>0$ and $f_1,\dots ,f_t\in \omega _X^{\otimes t}$ 
giving local coordinates at $x$; or
\item for each $x\in X$ there exists an integer $t>0$ and $f_1,\dots ,f_n\in \omega _X^{\otimes t}$ giving local coordinates of $X$ at 
$x$ and for all $x, y\in X$ such that $x\ne y$ there exists
$m>0$ and $f\in H^0(\omega _X^{\otimes m})$ such that $f(x)=0$ and $f(y)\ne 0$.
\end{enumerate}
Then there exists a normal projective variety $W$ and an open embedding $j\colon X\to W$.
\end{proposition}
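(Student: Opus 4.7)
The plan is to apply verbatim the $\mathrm{Proj}$-construction laid out in the paragraphs preceding the proposition, specialised to the choice $L := \omega_X$. First I would verify that conditions (1) and (2) give exactly the input data that construction requires of the pair $(X, L)$: case (2) provides both local coordinates via sections of some $\omega_X^{\otimes t}$ and separation of distinct points by sections of $\omega_X^{\otimes m}$ for some $m$, which is the full input demanded when $n = 2$; case (1) provides only the local-coordinate property, but the additional assumption $n \geq 3$ will have to compensate for the absence of a separation hypothesis.

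The analytic heart of the argument is to show that the graded ring
$$A(X, \omega_X) := \bigoplus_{m \geq 0} H^0(X, \omega_X^{\otimes m})$$
is a finitely generated $\mathbb{C}$-algebra. Each graded piece is finite-dimensional because $X$ is pseudoconcave and each $\omega_X^{\otimes m}$ is coherent, so that the Andreotti--Grauert finiteness theorem for concave manifolds applies \cite{hl}. Finite generation of the whole algebra under our hypotheses is the conclusion of Andreotti's algebraisation theorem for pseudoconcave manifolds equipped with a line bundle with sufficiently many sections, proved in the references \cite{and2, as, at} cited earlier in this section.

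Once finite generation is established, set $\Gamma := \Proj(A(X, \omega_X))$, a well-defined complex projective scheme. The local-coordinate assumption makes $L$ base-point free in large enough degrees, which induces a natural holomorphic map $u \colon X \to \Gamma$. Since $X$ is smooth it is normal, and so $u$ factors through the normalization $v \colon W \to \Gamma$ as $u = v \circ j$, with $W$ a normal projective variety. The local-coordinate condition makes $j$ a local biholomorphism onto its image; the point-separation condition (explicit in (2), to be deduced in (1)) then makes $j$ injective, yielding the desired open embedding of $X$ into $W$.

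The main obstacle I anticipate is the separation step in case (1). Under the local-coordinate hypothesis alone, $j$ is a priori only a local biholomorphism, and two distinct points of $X$ could conceivably map to the same point of $\Gamma$. To upgrade $j$ to an embedding when $n \geq 3$, I would exploit the full strength of pseudoconcavity in dimension $\geq 3$: it forces the algebraic dimension of $X$ to equal $n$, constrains the generic fibres of $u$ to be finite, and allows, after replacing $\omega_X$ by a sufficiently high power, to realise $j$ as birational onto its image. Combined with the fact that $v \colon W \to \Gamma$ is the normalization, this shows $j$ is injective and hence an open embedding, paralleling the arguments given in \cite{at, as}.
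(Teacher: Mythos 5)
Your proposal is correct and follows essentially the same route as the paper: the paper's proof consists precisely of citing \cite[Thm.~2]{at} (equivalently \cite[Thm.~4.3.1]{and2}) for case (2) and deducing case (1) from case (2) together with \cite[Thm.~4.4.1]{and2}, and your argument simply unfolds the $\Proj$-construction of those references, which is already spelled out in the paragraphs preceding the proposition. The only difference is cosmetic: for case (1) the paper reduces to case (2) by invoking \cite[Thm.~4.4.1]{and2} to supply the missing point-separation in dimension $\ge 3$, whereas you sketch the injectivity of $j$ directly via birationality and normalization, but both versions ultimately delegate that step to the same results of Andreotti, Siu and Tomassini.
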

\begin{proof}
Part (2) is \cite[Thm.\thinspace2]{at} or \cite[Thm.\thinspace4.3.1]{and2}. Part (1) follows from part (2) and \cite[Thm.\thinspace4.4.1]{and2}.
\end{proof}

We then expect that the following instance of the vague question phrased in \ref{vague} should be solvable in reasonably short time. 
\begin{question}
Solve question \ref{vague} for spaces $X$ satisfying the properties in Proposition \ref{0conv}.
\end{question}

\section{Final remarks}

We have stated a few concrete open questions on deformation theory of noncompact manifolds, of compactifiable manifolds,
manifolds with boundary, and $q$-concave spaces. Each of the sections $3$, $4$, and $5$ may give rise to 
an entire graduate thesis, and certainly to new publications. Also, each question about deformations has a corresponding 
question about moduli which should also be addressed. 
Moreover, there are many large fundamental questions about deformations that remain open, we finish by stating  a few of 
them.

\begin{question}
Under what conditions do all deformations of the total space of a vector bundle have the structure of affine bundles?
\end{question}

\begin{question}
Let $X$ be a noncompact Calabi--Yau threefold.  What geometric 
conditions on 
 $X$ imply that  it has infinitely  many nonisomorphic deformations?
(Compare with \cite{BGS} and \cite{GKRS}).
\end{question}

\begin{question}
Let $X$ be obtained from $\tilde X$ by removing finitely many   points. 
Compare  deformations of $X$ to those of $\tilde X$.
\end{question}

Finally, perhaps the question that must be answered most urgently:

\begin{question}
What is the correct cohomology theory that parametrizes deformations of noncompact manifolds?
(Vanishing of such a cohomology should imply  existence of   only trivial deformations.)
\label{main-question}
\end{question}

\section{Acknowledgements} 
This worked started during a meeting of Research in Pairs supported by the
Centro Internazionale per la Ricerca Matematica, of
Fondazione Bruno Kessler,  Trento (Italy).
F. R. was supported by 
Beca Doctorado Nacional Conicyt Folio 21170589.
E. G. and F. R. thank the Office of External Activities 
of ICTP for the support under
Network grant NT8.
E.B. was partially supported by MIUR and GNSAGA of INdAM (Italy). The authors thank Severin Barmeier for suggesting several improvements to the original text. Finally, we gladly acknowledge the help of the referee, who made us not only improve the paper, but  also learn more about the subject.

\end{document}